\newtheorem{theorem}{Theorem}
\newtheorem{proposition}{Proposition}
\newtheorem{remark}[theorem]{Remark}
\newtheorem{open}{Open problem}
\newcommand{\rr}{\mathbb R}
\newcommand{\gm}{\gamma}
\title{Variational Construction of Homoclinic and Heteroclinic  Orbits in the Planar Sitnikov Problem}
\author{Yuika Kajihara, Mitsuru Shibayama and Guowei Yu}
\begin{document}
\maketitle

\begin{abstract}
The Sitnikov problem is a special case of the three-body problem.
The system is known to be chaotic and has been studied by symbolic dynamics  (J. Moser, Stable and random motions in dynamical systems, Princeton University Press, 1973).
We study the limiting case of the Sitnikov problem as the eccentricity of the massive particles tends to 1.
By variational method,  we show the existence of  infinitely many homoclinic and heteroclinic solutions in the planar Sitnikov problem. In a previous work, for certain periodic symbolic sequences, the second author showed the existence of periodic solutions realizing them.
In this paper, we show the existence of homoclinic and heteroclinic solutions between some of these periodic orbits which realize certain non-periodic symbolic sequences.
\end{abstract}

\section{Introduction and Main Theorem}
Consider the three-body problem that is governed by the following ordinary differential equations (ODEs):
\begin{align}
    \ddot{q}_{1}&= -m_{2}\frac{q_{1}-q_{2}}{|q_{1}-q_{2}|^{3}} -m_{3}\frac{q_{1}-q_{3}}{|q_{1}-q_{3}|^{3}} \notag\\
    \ddot{q}_{2}&= -m_{1}\frac{q_{2}-q_{1}}{|q_{2}-q_{1}|^{3}} -m_{3}\frac{q_{2}-q_{3}}{|q_{2}-q_{3}|^{3}} \label{eqn:3bp}\\
    \ddot{q}_{3}&= -m_{1}\frac{q_{3}-q_{1}}{|q_{3}-q_{1}|^{3}} -m_{2}\frac{q_{3}-q_{2}}{|q_{3}-q_{2}|^{3}}.\notag
\end{align}
Here, $q_{k} \in \mathbb{R}^{3}$  and $m_k \ge 0$ for $k = 1,2,3$ represent positions and masses, respectively.

Sitnikov \cite{Sitnikov} considered a special case of the three-body problem in which two masses are equal and the other is zero, as well as the massless particle moves on the $z$-axis and the other two masses move along elliptic curves on the $xy$-plane, which are periodic orbits of the Kepler problem (Figure \ref{fig:Sitnikov}).
\begin{figure}
    \centering
    \includegraphics[width=3.5in]{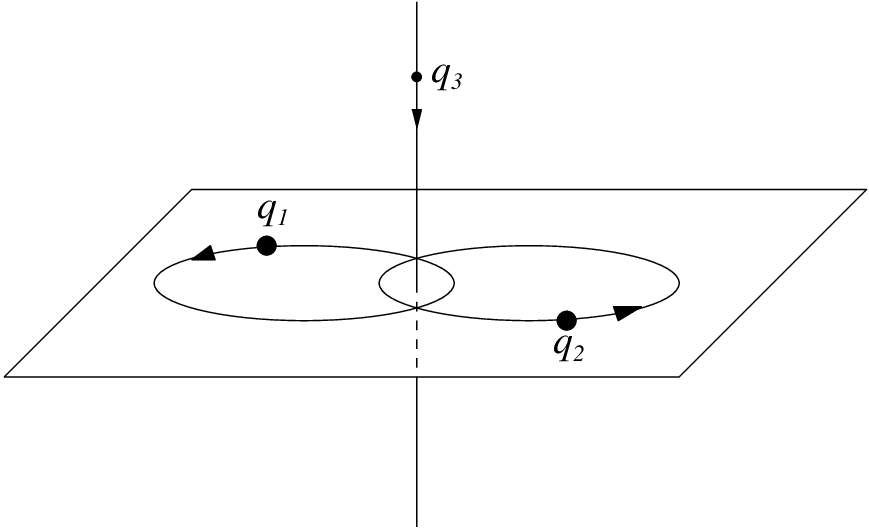}
    \caption{Sitnikov problem}
    \label{fig:Sitnikov}
\end{figure}
This became known as the Sitnikov problem, which is famous for the subsystem in which oscillatory orbits were first proven to exist. 
Using symbolic dynamics, the system was proven to be chaotic by Alekseev \cite{Alek} and Moser \cite{Moser}.

On the other hand, Chenciner \& Montgomery \cite{CM00} proved the existence of an eight-shaped periodic solution in the planar three-body problem with equal masses by variational method.
Since then, many periodic and relative periodic solutions have been found in the $N$-body problem as action minimizers in  different settings.

For the planar Sitnikov problem, in the previous work \cite{Shibayama},  the second author induced symbolic sequences and showed the existence of orbits realizing prescribed symbolic sequences with finite length and periodic orbits realizing prescribed periodic symbolic sequences by variational method. 
In this paper, we prove the existence of orbits realizing prescribed symbolic sequences with infinite length, which are homoclinic and heteroclinic orbits between periodic orbits, also using variational method.

Let $m_{1}=m_{2}=1$ and  $m_{3}=0$ for \eqref{eqn:3bp}, which can then be rewritten as 
\begin{align*}
    \ddot{q}_{1}&= -\frac{q_{1}-q_{2}}{|q_{1}-q_{2}|^{3}} \\
    \ddot{q}_{2}&= -\frac{q_{2}-q_{1}}{|q_{2}-q_{1}|^{3}}   & & (q_1, q_2, q_3 \in \mathbb{R}^2)\\
    \ddot{q}_{3}&= -\frac{q_{3}-q_{1}}{|q_{3}-q_{1}|^{3}} -\frac{q_{3}-q_{2}}{|q_{3}-q_{2}|^{3}}.
\end{align*}
The motion of  $q_{1}$ and $q_{2}$ is governed by the two-body problem.

There is a limiting case such that $q_{1}$ and $q_{2}$ move on the $x$-axis and $q_{3}$ moves on the $y$-axis.
We denote 
\[ q_{1}=(x(t), 0), q_{2}=(-x(t), 0), q_{3}=(0, y(t)).\] 
Here, $(x,y)$ is a solution that satisfies the following equations:
\begin{align*}
    \ddot{x}&= -\frac{1}{4  x^{2}} \\
    \ddot{y}&=-\frac{2y}{(x^{2}+y^{2})^{3/2}}.
\end{align*}
By rescaling, we can rewrite this as
\begin{align}
    \ddot{x}&= -\frac{1}{8  x^{2}}\label{eqn:x} \\
    \ddot{y}&=-\frac{y}{(x^{2}+y^{2})^{3/2}}.\label{eqn:y}
\end{align}
Let $x(t)$ be the periodic solution of \eqref{eqn:x} with period $1$ such that $x(t)=0 \ (t \in \mathbb{Z})$ and $x(t)>0 \ (t \in \mathbb{R} \backslash \mathbb{Z}$). 
The binary collision is regarded as regularized.
The regularization can be intuitively thought of as the elastic bounce (Figure \ref{fig:plsit}).
\begin{figure}[htbp]
    \centering
    \includegraphics[width=5cm]{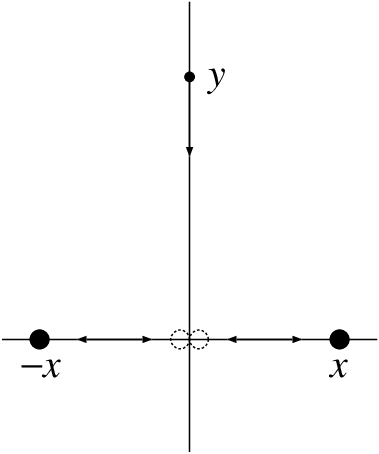}
    \caption{Planar Sitnikov problem}
    \label{fig:plsit}
\end{figure}

Consider a sequence $\bm{a}=\{a_{n}\}_{n \in \mathbb{Z}} \in \{-1, 1\}^{\mathbb{Z}}$.
Define the subset $\mathcal{M}$ of $\{-1, 1\}^\mathbb{Z}$ as the set of sequences $\bm{a}$ such that the length of  each successive part of $-1$ or $+1$ is no less than 3. 
More accurately, the sequence  $\bm{a} \in \{-1, 1\}^{\mathbb{Z}}$ is represented as 
\[\mathbf{a} : \dots, \underbrace{-1, -1, \dots, -1}_{k_{-1}}, \underbrace{1, 1, \dots, 1}_{{k_{0}}}, \underbrace{-1, -1, \dots, -1}_{k_{1}}, \underbrace{1, 1, \dots, 1}_{k_{2}}, \dots, \]
where $\{k_j\}_{j \in \mathbb{Z}}\subset \mathbb{N}$ is a bounded sequence. 
Then, let $\mathcal{M}$ be the set of $\bm{a}$ such that $k_j \ge 3$ for any $j \in \mathbb{Z}$.
Clearly, $\mathcal{M}$ is not empty for $N\ge 6$.

For $\bm{a} \in \mathcal{M}$ and $K_1,  K_2 \in \mathbb{Z}$ with $K_1<K_2$, we define three sets of functions $\Omega(\bm{a})$,  $\Omega^{\mathrm{per}}_{N}(\bm{a})$, and $\Omega_{K_1, K_2}(\bm{a})$ by
\begin{align*}
    \Omega(\bm{a}) &=\{ y(t) \in H^{1}_{\mathrm{loc}}(\mathbb{R}, \mathbb{R}) \mid a_{n} y(n) > 0 ~(n \in \mathbb{Z})\}, \\
    \Omega^{\mathrm{per}}_{N}(\bm{a})&=\{ y(t) \in H^{1}(\mathbb{R}/N\mathbb{Z}, \mathbb{R}) \mid a_{n} y(n) > 0 ~(n \in [0, N] \cap \mathbb{Z})\}, \ \text{and}\\
    \Omega_{K_1, K_2}(\bm{a}) &=\{ y(t) \in H^{1}([K_1, K_2], \mathbb{R}) \mid a_{n} \cdot y(n) > 0 ~(n  \in [K_1, K_2] \cap \mathbb{Z} )\}.
\end{align*} 
Moreover, we define subset $\mathcal{O}(\bm{a})_{N}$  of $\Omega(\bm{a})$ by
\begin{align*}
    \mathcal{O}(\bm{a})&= \{y \in  \Omega(\bm{a}) \mid \text{$y$ is a solution of the Sitnikov problem.}\}
\end{align*}
\begin{figure}[htbp]
\begin{center}
\includegraphics[height=5cm]{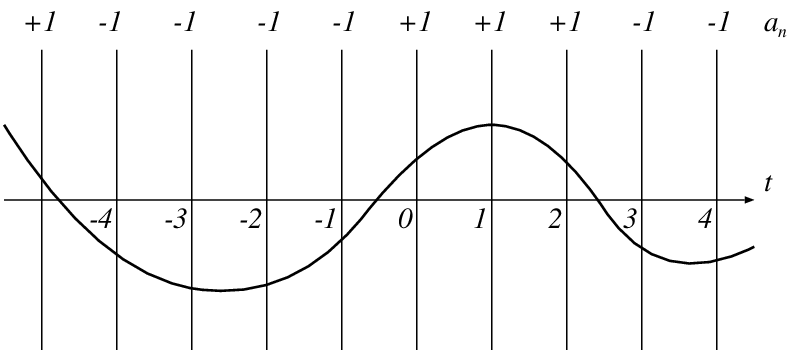}
\caption{Behavior of $y(t) \in \Omega(\bm{a})$}
\label{fig:curve}
\end{center}
\end{figure}
Figure \ref{fig:curve} shows an element of $\Omega(\bm{a})$ where $a_0=a_1=a_2=1$ and $a_{-1}=a_{-2}=a_{-3}=a_{-4}=-1$.

It is natural to assume that $\bm{a}$ is periodic when considering $\Omega^{\mathrm{per}}_{N}(\bm{a})$ and $\mathcal{O}^{\mathrm{per}}_{N}(\bm{a})$.
For $N \in \mathbb{N}$, let
\[
\mathcal{P}_N =\{\bm{a} \in \mathcal{M}\backslash \{\bm{e}^\pm\} 
\mid a_n = a_{n+N} \ ( n \in \mathbb{Z}) \}.
\]
where\[\bm{e}^+ = \dots, 1, 1, 1, \dots, \quad \bm{e}^- = \dots, -1, -1, -1, \dots.\]
Define a subset $\mathcal{O}^{\mathrm{per}}(\bm{a})$ of $\Omega(\bm{a})$ by
\begin{align*}
    \mathcal{O}^{\mathrm{per}}_{N} (\bm{a})&=\{y \in  \mathcal{O}(\bm{a}) \mid \text{$y$ is $N$-periodic.}\}
\end{align*}
The second author used minimizing methods to show $\mathcal{O}^{\mathrm{per}}_{N}(\bm{a}) \neq \emptyset$ for $N \ge 6$ \cite{Shibayama}.
More precisely, the planar Sitnikov problem has been shown to have infinitely many periodic orbits.
\begin{theorem}[\cite{Shibayama}, Theorem 2] 
\label{theorem:Shibayama}
For any $\bm{a} \in \mathcal{P}_N$ with $N \ge 6$,
there exists an $N$-periodic solution $y \in \mathcal{O}^{\mathrm{per}}_{N}(\bm{a})$ satisfying the following conditions:
\begin{enumerate}
    \item For any $n \in \mathbb{Z}$ satisfying $a_n a_{n+1} =-1$, there exists a unique $t \in (n,n+1)$ such that $y(t)=0$, 
    \item For any $n \in \mathbb{Z}$ satisfying $a_n a_{n+1} =1$, it holds that $y(t)\neq 0$ for all $t \in (n,n+1)$, and
    \item The solution $y$ attains the infimum, i.e., 
    \[\mathcal{A}_{0,N}(y)=\inf_{x \in \Omega^{\mathrm{per}}_{N}(\bm{a})} \mathcal{A}_{0,N}(x),\]
    where
    \[\mathcal{A}_{s,t}(y) = \int_{s}^{t} L(y) dt
    =\int_{s}^{t}
    \frac{1}{2}\dot{y}^2+\frac{1}{\sqrt{(x(t))^2+y^2}}dt.
    \]
\end{enumerate}
\end{theorem}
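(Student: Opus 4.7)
The plan is the direct method of the calculus of variations on $\Omega^{\mathrm{per}}_N(\bm{a})$. Since both terms in $L$ are non-negative, $c := \inf_{\Omega^{\mathrm{per}}_N(\bm{a})} \mathcal{A}_{0,N} \ge 0$ is finite, and for any minimizing sequence $\{y_k\}$ one has $\|\dot y_k\|_{L^2([0,N])}^2 \le 2\mathcal{A}_{0,N}(y_k)$ uniformly bounded. Since $\bm{a} \ne \bm{e}^\pm$, there exists $n_0$ with $a_{n_0}a_{n_0+1}=-1$, forcing each $y_k$ to vanish somewhere in $[0,N]$; a Poincar\'e estimate then bounds $\|y_k\|_{L^\infty}$. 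Pass to a subsequence weakly convergent in $H^1(\rr/N\zz)$ and uniformly in $C^0$ to a limit $y_*$ with $a_n y_*(n) \ge 0$ for all $n$. Weak lower semicontinuity of the kinetic energy and Fatou's lemma for the potential give $\mathcal{A}_{0,N}(y_*) \le c$.

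The crucial step is to upgrade $a_n y_*(n) \ge 0$ to the strict inequality $a_n y_*(n) > 0$, so that $y_*$ lies in the open constraint set and really attains the infimum. If $y_*(n_0) = 0$ then, because $x(n_0) = 0$ as well, $y_*$ passes through the singular locus $\{(t,0) : t \in \zz\}$ of the potential. The assumption $k_j \ge 3$ guarantees that $n_0$ sits inside (or adjacent to the interior of) a run of at least three equal symbols, so a small neighborhood of $n_0$ is subject to only one sign constraint, leaving room for a local lift of $y_*$ away from $0$. Comparing the $L^1$ gain in $1/\sqrt{x^2+y^2}$ with the $L^2$ kinetic cost of the bump then shows the competitor has strictly smaller action, contradicting minimality. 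The same cut-and-paste idea will handle condition 2: an interior zero of $y_*$ in $(n,n+1)$ when $a_n a_{n+1}=1$ produces a sign flip that can be removed by reflection or lifting, strictly reducing the action.

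Because $y_* \in \Omega^{\mathrm{per}}_N(\bm{a})$ is now a critical point of a smooth Lagrangian on each $(n, n+1)$, standard arguments give the Euler--Lagrange equation \eqref{eqn:y} on each open unit interval, so $y_* \in \mathcal{O}^{\mathrm{per}}_N(\bm{a})$. Condition 1 comes from the intermediate value theorem (existence of a zero in $(n,n+1)$ when $a_n a_{n+1}=-1$) together with a uniqueness argument using the ODE dichotomy $\ddot y_* < 0$ on $\{y_*>0\}$ and $\ddot y_*>0$ on $\{y_*<0\}$, where any additional crossings are again ruled out by a reflection competitor. Condition 3 is the content of the construction itself.

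The main obstacle is the non-degeneracy argument. Because $x(t)^2$ vanishes only like $|t-n|^{4/3}$, the potential $1/\sqrt{x^2+y^2}$ stays integrable even when $y(n)=0$, so the action does not automatically blow up and no soft argument can suffice: I will have to produce a quantitative estimate showing that the precise potential cost of passing through $(n_0,0)$ strictly dominates the $H^1$ cost of a well-chosen localized perturbation. The hypothesis $k_j \ge 3$ enters sharply here, since it is exactly what allows the perturbation to be supported in an interval on which every other sign constraint is automatically preserved.
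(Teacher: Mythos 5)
This theorem is not proved in the present paper: it is imported verbatim from \cite{Shibayama} (Theorem 2 there), so there is no in-paper argument to compare yours against. That said, your outline --- the direct method on $\Omega^{\mathrm{per}}_{N}(\bm{a})$, coercivity via the forced zero and a Cauchy--Schwarz/Poincar\'e bound, weak lower semicontinuity, and then upgrading $a_n y_*(n)\ge 0$ to the strict inequality --- is the natural route and is consistent with what the cited result must do (item 3 of the statement says the solution is an action minimizer, and Section 2 of this paper reuses exactly the $|u_{\min}|\le\sqrt{2N\rho(\bm{b})}$ bound you describe).

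As a proof, however, the proposal has a genuine gap at the step you yourself flag as ``the main obstacle'': excluding $y_*(n)=0$, which is a total collision since $x(n)=0$. You correctly observe that the action stays finite there (because $x(t)\sim C|t-n|^{2/3}$ makes $1/\sqrt{x^2+y^2}$ integrable), so no soft blow-up argument applies; but you then only assert that a localized lift ``shows the competitor has strictly smaller action'' without producing the estimate. That estimate is the entire content of the collision-exclusion half of the theorem, and it is delicate: if $y_*$ approaches $n$ from the sign opposite to $a_n$, the admissible perturbation must push $y$ across zero, in which case the potential term can \emph{increase} near $n$, and the sign of the net change is precisely what has to be controlled. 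Moreover, your stated reason why $k_j\ge 3$ helps --- that a small neighborhood of $n_0$ carries only one sign constraint --- is vacuous, since a sufficiently small neighborhood of any single integer carries only one constraint; so the proposal never actually locates where the hypothesis $N\ge 6$ is used, whereas it must enter through a quantitative action comparison. The remaining items are fine: item 1 (uniqueness of the zero) and item 2 follow from reflecting the minimizer between consecutive interior zeros, noting the reflected path is again a minimizer hence a $C^2$ solution, and invoking uniqueness for the ODE where $x>0$.
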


Roughly speaking, our main theorem (Theorem \ref{th:main}) implies that there exists a heteroclinic or homoclinic orbit between two periodic orbits obtained by Theorem \ref{theorem:Shibayama} under a symmetric assumption.
To state it, we define the subset $\mathcal{S}_N$ of $\mathcal{P}_N$ by 
\[ \mathcal{S}_N =\{\bm{a} \in \mathcal{P}_N \mid a_n=a_{-n} \ (n \in \mathbb{Z}) \},\]
which can also be written as
\[ \mathcal{S}_N =\{\bm{a} \in \mathcal{P}_N \mid a_j=a_{N-j}\ (j=0,1,\cdots,N) \}.\]
 The main result of this paper is the following theorem.
\begin{theorem} \label{th:main}
Suppose that $\bm{a}=\{a_n\} \in \mathcal{M}$
and $\bm{b}^\pm =\{b_n^\pm\}\in \mathcal{S}_{N^{\pm}}$
and there are $K^\pm (K^- <K^+)$ such that $a_{n}=b_n^+$ for $n \ge K^+ +1$ and $a_{n}=b_n^-$ for $n \le K^--1$.
Then, there are periodic solutions $\gamma^{\pm}$ in $\mathcal{O}^{\mathrm{per}}_{N^{\pm}} ( \bm{b}^\pm)$ and a connecting orbit $y(t)\in \mathcal{O}(\bm{a})$ from $\gamma^-$ to $\gamma^+$.
Moreover, for $n \in \mathbb{Z}$ satisfying $a_n a_{n+1}=1$, there is no real value $t \in (n, n+1)$ such that $y(t)=0$,
and for  $n \in \mathbb{Z}$  satisfying  $a_n a_{n+1}=-1$ there is a unique real number $t \in (n, n+1)$ such that $y(t)=0$.
\end{theorem}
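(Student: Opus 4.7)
The plan is to produce the connecting orbit as a limit of finite-window action minimizers, exploiting the palindromic symmetry of $\bm{b}^{\pm}$ to pin down endpoint conditions. Because $\bm{b}^\pm \in \mathcal{S}_{N^\pm}$, a first step is to show that the periodic minimizer $\gamma^\pm$ supplied by Theorem \ref{theorem:Shibayama} may be chosen so that $\gamma^\pm(-t) = \gamma^\pm(t)$; this is achieved by symmetrizing a minimizing sequence and observing that the symmetrized path has no larger action than the original, so a symmetric minimizer exists. As a consequence, $\dot{\gamma}^\pm$ vanishes at $t \in \{0, N^\pm/2\}$ (up to parity), and these reflection times serve as canonical points at which to impose matching conditions.

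For integers $K_1 \ll K^-$ and $K_2 \gg K^+$, each chosen as a reflection time of the corresponding $\gamma^\pm$, I would minimize $\mathcal{A}_{K_1,K_2}$ over
\[
\Omega^{\gamma}_{K_1,K_2}(\bm{a}) := \{\, y \in \Omega_{K_1,K_2}(\bm{a}) \mid y(K_1) = \gamma^-(K_1),\ y(K_2) = \gamma^+(K_2)\,\}.
\]
The direct method yields a minimizer $y_{K_1,K_2}$, and the analysis of \cite{Shibayama}---in particular, the regularization of binary collisions and the exclusion of spurious zeros through local variational perturbations---shows that $y_{K_1,K_2}$ is a classical solution of \eqref{eqn:y} whose zero set matches $\bm{a}$ exactly: one zero in each interval $(n,n+1)$ with $a_n a_{n+1} = -1$, and none in those with $a_n a_{n+1} = 1$.

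The heart of the argument is a uniform bound on the renormalized action
\[
\mathcal{E}(K_1,K_2) := \mathcal{A}_{K_1,K_2}(y_{K_1,K_2}) - \mathcal{A}_{K_1,K^-}(\gamma^-) - \mathcal{A}_{K^+,K_2}(\gamma^+) \le C
\]
with $C$ independent of $K_1, K_2$. I would prove this by exhibiting as competitor the concatenation of $\gamma^-|_{[K_1,K^-]}$, a fixed bridging function on $[K^-,K^+]$ realizing the transitional symbolic segment of $\bm{a}$, and $\gamma^+|_{[K^+,K_2]}$, with continuity at the junctions guaranteed by the symmetry-based choice of the cutoffs. The ensuing $H^1_{\mathrm{loc}}$-bounds allow a diagonal subsequence to converge weakly in $H^1_{\mathrm{loc}}$ and uniformly on compact sets to a limit $y \in \Omega(\bm{a})$ that solves \eqref{eqn:y} classically with the asserted collision structure.

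The main obstacle I anticipate is proving the asymptotic convergence $y(\cdot + n N^+) \to \gamma^+$ as $n \to +\infty$ (and its mirror on the left). The uniform bound on $\mathcal{E}(K_1,K_2)$ forces $\mathcal{A}_{nN^+,(n+1)N^+}(y) \to \mathcal{A}_{0,N^+}(\gamma^+)$, so any subsequential limit of the translates is an $N^+$-periodic action minimizer in $\mathcal{O}^{\mathrm{per}}_{N^+}(\bm{b}^+)$. Identifying this limit with $\gamma^+$ requires a rigidity statement: up to the symmetry encoded in $\mathcal{S}_{N^+}$, the periodic minimizer is unique. I would attempt this by refining the cut-and-paste argument of \cite{Shibayama}---if two distinct symmetric minimizers existed, one could splice portions of each to contradict minimality, using the strict convexity of the kinetic term. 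Should full uniqueness be out of reach, a weaker compactness-of-minimizers argument, together with the $\mathbb{Z}_2$-symmetry supplied by $\bm{b}^+ \in \mathcal{S}_{N^+}$, should still yield convergence modulo a time translation, which suffices for the heteroclinic/homoclinic conclusion.
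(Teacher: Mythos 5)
Your overall skeleton (finite-window minimizers with Dirichlet data at reflection times, a renormalized action bound, and a diagonal limit) is a workable alternative to the paper's route, which instead minimizes a single Rabinowitz-type renormalized functional $J(u)=\sum_{p}a_p(u)$ over a space $\Gamma_1(\bm{a})$ in which the asymptotics to $\gamma^\pm$ are imposed from the start. However, the step you yourself flag as the main obstacle --- identifying the subsequential limit of the translates $y(\cdot+lN^+)$ with $\gamma^+$ --- is where the proposal genuinely breaks, and neither of your suggested fixes works. Uniqueness of the periodic minimizer is not available, and your splicing argument cannot deliver it: the cut-and-paste/strict-convexity argument (Proposition \ref{prop:ordered} in the paper) only shows that two minimizers in $\mathcal{N}(\bm{b}^+)$ never cross, i.e.\ that $\mathcal{N}(\bm{b}^+)$ is totally ordered; two distinct \emph{ordered} minimizers present nothing to splice, so an entire ordered family of minimizers may survive. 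Your fallback, ``convergence modulo a time translation,'' is also unavailable: equation \eqref{eqn:y} is non-autonomous (the potential depends on $t$ through the $1$-periodic $x(t)$), so only integer time shifts are symmetries, and those are already quotiented out in the sequence of translates; the residual multiplicity is the ordered family, not a translation orbit.

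The paper's resolution is a device you would need to import: $\mathcal{N}(\bm{b}^+)$ is shown to be totally ordered with a maximal and a minimal element, $\gamma^+$ is \emph{chosen} to be the maximal (resp.\ minimal) one according to the signs of $a_{K^+}$ and $b^+_{K^+}$, and a cut-and-paste comparison shows the connecting minimizer stays strictly on one side of $\gamma^+$ on $[K^+,\infty)$; a subsequential limit of the translates is then a periodic minimizer on that side and hence must be $\gamma^+$ itself (reinforced in the paper by building the asymptotic condition into $\Gamma_\infty$). A second, smaller point: you locate the role of the hypothesis $\bm{b}^\pm\in\mathcal{S}_{N^\pm}$ only in the endpoint matching, but its essential use is Proposition \ref{prop:ygerho}, the per-period lower bound $\mathcal{A}_{0,N}(y)\ge\rho(\bm{b})$ for arbitrary, not necessarily periodic, $y\in\Omega_{0,N}(\bm{b})$, obtained by reflecting $y$ and taking the max and min of the pair; without this bound neither your uniform estimate on $\mathcal{E}(K_1,K_2)$ nor the conclusion $\mathcal{A}_{nN^+,(n+1)N^+}(y)\to\rho(\bm{b}^+)$ actually controls the tails.
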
 

\begin{remark}
    Notice that $\bm{b}^+$ and $\bm{b}^-$ could be the same and then so are $\gm^+$ and $\gm^-$. In this case the connecting orbit is homoclinic, otherwise it is heteroclinic.
\end{remark}

This paper is simply structured, with Section \ref{section:gap} introducing a set of trajectories equipped with an order and a functional expressed as an infinite sum, following the approach used by Rabinowitz \cite{Rabinowitz04}.  
We examine its key properties and establish foundational results.  
The main theorem is proven in Section \ref{seq:mainproof}.  
The proof proceeds by considering minimizers of the functional introduced earlier, taken over an appropriate functional space.

\section{Preliminary}
\label{section:gap}
The technical details differ, but the overall structure of the proof is similar to that presented in \cite{Rabinowitz04}.

\subsection{Setting}
Fix $\bm{b} \in \mathcal{P}_N$. 
Set
\[ \rho(\bm{b})= \inf \{\mathcal{A}_{0, N}(y) \mid y \in \Omega^{\mathrm{per}}_{N}(\bm{b})\} \]
and 
\[ \mathcal{N}(\bm{b})= \{ y \in \Omega_{N}^{\mathrm{per}} (\bm{b}) \mid \mathcal{A}(y) = \rho(\bm{b})\}.\]
Theorem \ref{theorem:Shibayama} implies that $\mathcal{N}(\bm{b})$ is non-empty. 
First, we show that the set of minimizers $ \mathcal{N}(\bm{b})$  has an order.
\begin{proposition}\label{prop:ordered}
$\mathcal{N}(\bm{b})$ is a totally ordered set, i.e.,  
$y_1, y_2 \in \mathcal{N}(\bm{b})$ implies 
that one of the following is true:
\begin{enumerate}
    \item $y_1(t)=y_2(t)$ for all $t \in \mathbb{R}$
    \item $y_1(t)>y_2(t)$ for all $t \in \mathbb{R}$
    \item $y_1(t)<y_2(t)$ for all $t \in \mathbb{R}$
\end{enumerate}
\end{proposition}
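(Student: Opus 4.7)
The plan is to use the classical ``cut-and-paste'' argument for variational minimizers: given two minimizers $y_1, y_2\in \mathcal{N}(\bm{b})$, the functions $u(t)=\max\{y_1(t),y_2(t)\}$ and $v(t)=\min\{y_1(t),y_2(t)\}$ are both admissible competitors whose actions sum to that of $y_1$ and $y_2$; forcing $u$ and $v$ to be minimizers themselves and then invoking ODE uniqueness yields the trichotomy.

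First I would check admissibility: for every integer $n$, the sign condition $a_n y_i(n)>0$ ($i=1,2$) guarantees that $y_1(n)$ and $y_2(n)$ have the same sign as $a_n$. Hence $a_n u(n)>0$ and $a_n v(n)>0$, so $u,v\in \Omega^{\mathrm{per}}_N(\bm{b})$. Regularity-wise, $u$ and $v$ lie in $H^1(\mathbb{R}/N\mathbb{Z})$ since the max and min of two $H^1$ functions are again $H^1$. Second, since at each $t$ the unordered pair $(u(t),v(t))$ equals $(y_1(t),y_2(t))$ and the same identity holds a.e.\ for the derivatives, the Lagrangian satisfies $L(u)(t)+L(v)(t)=L(y_1)(t)+L(y_2)(t)$ a.e., whence
\[\mathcal{A}_{0,N}(u)+\mathcal{A}_{0,N}(v)=\mathcal{A}_{0,N}(y_1)+\mathcal{A}_{0,N}(y_2)=2\rho(\bm{b}).\]
Combined with $\mathcal{A}_{0,N}(u),\mathcal{A}_{0,N}(v)\ge \rho(\bm{b})$, this forces $\mathcal{A}_{0,N}(u)=\mathcal{A}_{0,N}(v)=\rho(\bm{b})$, so $u$ and $v$ are themselves minimizers in $\Omega^{\mathrm{per}}_N(\bm{b})$.

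Now suppose $y_1\not\equiv y_2$, yet there exists $t_0\in\mathbb{R}$ with $y_1(t_0)=y_2(t_0)$. Because $u$ is a minimizer, it satisfies the Euler--Lagrange equation $\ddot u=-u/(x(t)^2+u^2)^{3/2}$. Along a minimizer the denominator never vanishes: at integer $t=n$ the sign condition gives $u(n)\ne 0$, while at non-integer $t$ we have $x(t)\ne 0$. Consequently $u$ is a classical $C^1$ solution of the ODE, and in particular has no corners. If $\dot y_1(t_0)\ne \dot y_2(t_0)$, then $u$ would have a genuine corner at $t_0$, a contradiction. Hence $y_1(t_0)=y_2(t_0)$ and $\dot y_1(t_0)=\dot y_2(t_0)$. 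Since the ODE has a Lipschitz right-hand side in $y$ in a neighborhood of $(t_0,y(t_0))$ (using $y(t_0)\ne 0$ or $x(t_0)\ne 0$), standard ODE uniqueness yields $y_1\equiv y_2$ on some interval, and a standard continuation argument propagates this to all of $\mathbb{R}$, contradicting our assumption. Therefore $y_1(t)\ne y_2(t)$ for every $t\in\mathbb{R}$, and by continuity of $y_1-y_2$ either $y_1>y_2$ everywhere or $y_1<y_2$ everywhere.

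The main technical obstacle is step three: establishing that the minimizer $u$ is genuinely a $C^1$ (in fact $C^2$ on each interval $(n,n+1)$) solution of the Euler--Lagrange equation, so that a corner produces a real contradiction. This requires confirming that along any minimizer the potential $1/\sqrt{x(t)^2+y^2}$ remains finite, which reduces to checking that $y(n)\ne 0$ at the integer instants where $x(n)=0$---precisely what the admissibility condition provides---and carefully handling the limited regularity of $x(t)$ at integer times (where $x(t)$ behaves like $|t-n|^{2/3}$) when applying uniqueness of the ODE across a collision instant of the primaries.
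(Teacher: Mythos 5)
Your proposal is correct and follows essentially the same route as the paper: form $\max\{y_1,y_2\}$ and $\min\{y_1,y_2\}$, note that their actions sum to $2\rho(\bm{b})$ so both are forced to be minimizers and hence solutions, and conclude via uniqueness for the ODE (which applies because the admissibility condition keeps $x(t)^2+y^2$ bounded away from zero). The only cosmetic difference is how equality of derivatives at a touching point is obtained --- you invoke the no-corner regularity of the minimizer $u=\max\{y_1,y_2\}$, whereas the paper first deduces $\min\{y_1,y_2\}\equiv y_1$ (hence $y_1\le y_2$ globally) from uniqueness and then reads off $\dot y_1(t_1)=\dot y_2(t_1)$ from the tangency; both variants are standard and interchangeable.
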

\begin{proof}
If not, there are points $t_1, t_2$ such that $y_1(t_1)=y_2(t_1)$ and $y_1(t_2)<y_2(t_2)$ (or $y_1(t_2)>y_2(t_2)$).
Set
\[
\phi(t)= \min \{y_1(t), y_2(t)\},
\ \text{and} \ \psi(t)= \max \{y_1(t), y_2(t)\}.
\]
Then, both $\phi(t)$ and $\psi(t)$ are also elements of $\mathcal{N}(\bm{b})$ because
\[2\rho(\bm{b}) \le \mathcal{A}_{0, N}(\phi) + \mathcal{A}_{0, N}(\psi)=  \mathcal{A}_{0, N}(y_1)+ \mathcal{A}_{0, N}(y_2)= 2 \rho(\bm{b}).\]
Hence, $\phi$ and $\psi$ are also solutions. 

In addition, $\phi(t)$ and $y_1(t)$ are identical on an interval including $t_2$.
By the uniqueness of solutions, $\phi(t)$ and $y_1(t)$ must be identical for all $t \in \rr$, and the same holds for $\psi(t)$ and $y_2(t)$. 
Therefore,
$y_1(t) \le y_2(t)$ for all $t \in \rr$, $y_1(t_1)=y_2(t_1)$, and $\dot{y}_1(t_1)=\dot{y}_2(t_1)$. 
Applying the uniqueness of the solutions again, we conclude that $y_1=y_2$.
\end{proof}
\begin{proposition}
    There exist a maximum solution $\gamma_{\max}$ and a minimum solution $\gamma_{\min}$ in the ordered set $\mathcal{N}(\bm{b})$, such that for any $\gamma \in \mathcal{N}(\bm{b})$, the inequality
\[
\gamma_{\min}(t) \le \gamma(t) \le \gamma_{\max}(t)
\]
holds for all $t \in \mathbb{R}$.

\end{proposition}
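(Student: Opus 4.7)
The plan is to realize $\gm_{\max}$ as the pointwise supremum of $\mathcal{N}(\bm{b})$ and show it lies in the set; the construction of $\gm_{\min}$ is symmetric. First I would establish a uniform $L^\infty$ bound on $\mathcal{N}(\bm{b})$: any $\gm \in \mathcal{N}(\bm{b})$ satisfies $\int_0^N \tfrac{1}{2}\dot\gm^2\,dt \le \rho(\bm{b})$, and since $\bm{b} \in \mathcal{P}_N$ is nonconstant, Theorem \ref{theorem:Shibayama}(1) supplies at least one zero of $\gm$ in $[0,N]$; Cauchy--Schwarz then gives $\|\gm\|_{L^\infty} \le \sqrt{2\rho(\bm{b})N}$. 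Hence $\gm_{\max}(t) := \sup_{\gm \in \mathcal{N}(\bm{b})} \gm(t)$ is a bounded function of $t$.

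Next, fix a reference point $t_* \in [0,N]$ and take $\gm_k \in \mathcal{N}(\bm{b})$ with $\gm_k(t_*) \to \gm_{\max}(t_*)$. Using the trichotomy in Proposition \ref{prop:ordered}, after extraction the sequence $\gm_k$ is monotonically increasing pointwise, because distinct values at $t_*$ force strict ordering everywhere. The pointwise monotone limit $\tilde\gm$ then exists and is bounded; combined with the uniform $H^1$ bound, Arzel\`a--Ascoli produces uniform convergence on $[0,N]$, and $\tilde\gm$ is $N$-periodic.

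I then need $\tilde\gm \in \mathcal{N}(\bm{b})$. Weak $H^1$ lower semicontinuity of the kinetic term and Fatou applied to the nonnegative potential give $\mathcal{A}_{0,N}(\tilde\gm) \le \liminf \mathcal{A}_{0,N}(\gm_k) = \rho(\bm{b})$. The real obstacle is the strict sign condition: for $n$ with $a_n = +1$ monotonicity gives $\tilde\gm(n) \ge \gm_1(n) > 0$, but for $a_n = -1$ the monotone limit of negative numbers could a priori equal zero, which would correspond to a triple collision since $x(n) = 0$ as well. To rule this out I would carry out a local cut-and-paste surgery in a small neighborhood of $t=n$, replacing the near-collision portion of $\tilde\gm$ with a competitor bounded away from the singularity, and show that the resulting saving of action is strictly positive; this contradicts the inequality $\mathcal{A}_{0,N}(\tilde\gm) \le \rho(\bm{b})$ combined with the definition of infimum. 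This quantitative estimate, which amounts to a uniform lower bound $|\gm(n)| \ge c>0$ across all $\gm \in \mathcal{N}(\bm{b})$, is the technical heart of the argument.

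Finally, with $\tilde\gm \in \mathcal{N}(\bm{b})$ secured, I would conclude $\tilde\gm = \gm_{\max}$ pointwise: for any $s \in \mathbb{R}$ and any $\gm \in \mathcal{N}(\bm{b})$, Proposition \ref{prop:ordered} applied to $\gm$ and $\tilde\gm$ rules out $\gm(s) > \tilde\gm(s)$, since by the trichotomy this would force $\gm(t_*) > \tilde\gm(t_*) = \gm_{\max}(t_*)$, contradicting the definition of the supremum. Hence $\gm(s) \le \tilde\gm(s)$ for all $\gm$ and $s$, proving $\gm_{\max} \le \tilde\gm$; the reverse inequality is immediate. The construction of $\gm_{\min}$ is identical, using pointwise infima and decreasing sequences.
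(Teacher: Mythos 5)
Your proposal is correct and follows essentially the same route as the paper: a uniform bound via Cauchy--Schwarz from the kinetic-energy bound and the existence of a zero, an extremizing sequence for the value at a single reference point, lower semicontinuity to place the limit back in $\mathcal{N}(\bm{b})$, and the total ordering of Proposition \ref{prop:ordered} to upgrade extremality at one point to extremality everywhere. The only difference is that you explicitly flag and propose to handle the possible degeneration $\tilde\gamma(n)=0$ in the limit (the collision/sign issue), which the paper's proof passes over silently, deferring collision-avoidance of minimizers to the cited work of Shibayama.
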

\begin{proof}
Let 
\[ c=\inf \{ u(0) \mid u \in \mathcal{N}(\bm{b}) \}. \]
We first show that $c > -\infty$.
If $u \in \mathcal{N}(\bm{b})$, it holds that
\[ \int_0^N \frac{1}{2} \dot{u}^2 dt \le \mathcal{A}_{0, N}(u)= \rho (\bm{b}). 
\]
Since $\bm{b} \in \mathcal{P}_N$, 
there is a real number $t_0$ such that $u(t_0)=0$.
Let 
\[ u_{\min} = \min_{t \in [0, T]} u(t)=u(t_{\min}).\]
Thus, the Cauchy–Schwarz inequality implies
\[|u_{\min}| = \left| \int_{t_0}^{t_{\min}} \dot{u} dt\right| 
\le  \int_{0}^{N} |\dot{u} |dt
=\langle 1,  |\dot{u}| \rangle 
\le 
\sqrt{N} \left(\int_0^N \dot{u}^2 dt\right)^{1/2}.
\]
Therefore, we obtain
\[ |u_{\min}| \le \sqrt{2N\rho(\bm{b})}.  \]

Then, there exists a sequence $u_k \in \mathcal{N}(\bm{b}) $ such that 
$u_k(0) \to c$. 
From the lower semi-continuity of the action functional, 
there exists the limit $\gamma \in  \mathcal{N}(\bm{b})$ of $u_k$ 
that satisfies $\gamma(0)=c$.
Set
\[
\pi_0(\mathcal{N}(\bm{b}))
=\{y(0) \in \rr \mid y \in\mathcal{N}(\bm{b}) \}.
\]
We show that $\pi_0(\mathcal{N}(\bm{b}))$ is a closed set as follows.
Take $\{y_n\} \subset \mathcal{N}(\bm{b})$ such that $y_n(0) \to y(0)$.
The lower semi-continuity for $J$ yields
\[
\rho({\bm b}) =  \mathcal{A} \left( y_{n} \right) 
\ge \mathcal{A}(y),
\]
which implies $y \in \mathcal{N}(\bm{b})$ and $y(0) \in \pi_0(\mathcal{N}(\bm{b}))$.
\end{proof}
For $\bm{b} \in \mathcal{P}_N$ and $k \in \mathbb{N}$, $k\bm{b} \in \mathcal{P}_{kN}$ is the $k$ number of successive $\bm{b}$s.
The set $\mathcal{N}$ is invariant under integer multiples of the sequence $\bm{b}$, as described in the following proposition.
\begin{proposition}\label{prop:kper}
For any $k \in \mathbb{N}$,
$\mathcal{N}(k\bm{b})=\mathcal{N}(\bm{b})$ and $\rho(k\bm{b})=k \rho(\bm{b})$.
\end{proposition}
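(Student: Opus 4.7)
The plan is to establish the two statements simultaneously by sandwiching $\rho(k\bm{b})$ between $k\rho(\bm{b})$ and itself, with the key structural fact being that every $kN$-periodic minimizer is automatically $N$-periodic. First I would dispose of the easy direction: given any $\gamma \in \mathcal{N}(\bm{b})$, it is automatically $kN$-periodic and its integer values still satisfy the sign conditions for $k\bm{b}$ since $(k\bm{b})_n = b_{n \bmod N}$. Hence $\gamma \in \Omega^{\mathrm{per}}_{kN}(k\bm{b})$ and a direct computation gives $\mathcal{A}_{0,kN}(\gamma) = k\mathcal{A}_{0,N}(\gamma) = k\rho(\bm{b})$, which yields $\rho(k\bm{b}) \le k\rho(\bm{b})$ and, once the reverse bound is in place, the inclusion $\mathcal{N}(\bm{b}) \subseteq \mathcal{N}(k\bm{b})$.

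For the reverse direction, I would pick any minimizer $y \in \mathcal{N}(k\bm{b})$, which is nonempty by Theorem \ref{theorem:Shibayama} applied to $k\bm{b} \in \mathcal{P}_{kN}$, and show it must be $N$-periodic. Consider the time shift $y_N(t) := y(t+N)$. Since $(k\bm{b})_{n+N} = (k\bm{b})_n$, one has $(k\bm{b})_n y_N(n) = (k\bm{b})_{n+N} y(n+N) > 0$, so $y_N \in \Omega^{\mathrm{per}}_{kN}(k\bm{b})$; the action is invariant under time translation, so $\mathcal{A}_{0,kN}(y_N) = \rho(k\bm{b})$ and $y_N \in \mathcal{N}(k\bm{b})$. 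Proposition \ref{prop:ordered}, applied verbatim with $k\bm{b}$ and $kN$ in place of $\bm{b}$ and $N$, forces exactly one of $y \equiv y_N$, $y > y_N$ everywhere, or $y < y_N$ everywhere to hold.

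The only real (and very mild) obstacle is ruling out the two strict cases. If $y(t) < y(t+N)$ for all $t$, iterating this inequality at $t = 0, N, 2N, \ldots, (k-1)N$ yields
\[
y(0) < y(N) < y(2N) < \cdots < y(kN),
\]
contradicting the $kN$-periodicity $y(kN) = y(0)$; the case $y > y_N$ is symmetric. Therefore $y \equiv y_N$, so $y$ is $N$-periodic.

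Once $N$-periodicity is established, $y$ lies in $\Omega^{\mathrm{per}}_{N}(\bm{b})$ (the sign conditions are inherited because $b_n = (k\bm{b})_n$), and splitting the integral gives $\mathcal{A}_{0,kN}(y) = k\mathcal{A}_{0,N}(y)$, so $\rho(\bm{b}) \le \mathcal{A}_{0,N}(y) = \rho(k\bm{b})/k$. Combined with the easy bound this forces $\rho(k\bm{b}) = k\rho(\bm{b})$, and then $\mathcal{A}_{0,N}(y) = \rho(\bm{b})$ shows $y \in \mathcal{N}(\bm{b})$. Together with the easy inclusion this closes both equalities.
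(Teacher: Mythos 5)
Your proof is correct and follows essentially the same route as the paper: both arguments compare a minimizer $y \in \mathcal{N}(k\bm{b})$ with its shift $y(\cdot+N)$ via the total-ordering property of Proposition \ref{prop:ordered} and rule out the strict alternatives by iterating the inequality around one $kN$-period. You additionally spell out the action bookkeeping ($\rho(k\bm{b}) \le k\rho(\bm{b})$ and the reverse bound) that the paper leaves implicit, which makes your write-up slightly more complete but not a different method.
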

\begin{proof}
Let \( v \in \mathcal{N}(k\bm{b}) \).  
We show that \( v(t+N) = v(t) \) for all $t \in \rr$.  
Assume, for the sake of contradiction, that \( v(t_0+N) \neq v(t_0) \) for some $t_0 \in \mathbb{R}$.  
Since \( \mathcal{N}(k\bm{b}) \) is an ordered set, one of the following holds true:
\begin{enumerate}
    \item $v(t+N) > v(t) $ for all $ t \in \rr.$
    \item $v(t+N) < v(t) $ for all $ t \in \rr.$
\end{enumerate}  
Consider the case where $v(t+1) > v(t)$.  
Then, by periodicity,  
\[v(t) = v(t+kN) > v(t+(k-1)N) > \dots > v(t+N) > v(t),\]  
which is a contradiction.  
Similarly, if $v(t_0+N) < v(t_0)$, we would derive a similar contradiction using the periodicity of $v$.   
\end{proof}
Moreover, any element $v \in \mathcal{N}(\bm{b})$ is also a minimizer under any two-point boundary conditions.
\begin{proposition}
Let $v \in \mathcal{N}(\bm{b})$ and $a, b\in \mathbb{R}$ with $ a<b$.
Set 
\[
W= \{ w \in H^1 ([a, b]) \mid w(a)=v(a), \ w(b)=v(b), \ b_i w(i)>0 \ \text{for all} \ i \in [a,b] \cap \mathbb{Z}\}.
\]
Then $v$ is a minimizer of $\mathcal{A}_{a,b}$ on $W$, i.e.,  
\[ \mathcal{A}_{a,b}(v) = \inf_{w \in W}\mathcal{A}_{a,b}(w).\]
\end{proposition}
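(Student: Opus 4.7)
The plan is to argue by contradiction via a cut-and-paste construction, using Proposition \ref{prop:kper} to enlarge the period so that the interval $[a,b]$ fits inside one fundamental domain.

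Suppose for contradiction that there is $w \in W$ with $\mathcal{A}_{a,b}(w) < \mathcal{A}_{a,b}(v)$. After a translation by an integer multiple of $N$ (which preserves everything since $v$ is $N$-periodic and $\bm{b}$ is $N$-periodic), I may assume $0 < a < b$. Pick $k \in \mathbb{N}$ large enough that $b \le kN$. By Proposition \ref{prop:kper}, viewing $v$ as a $kN$-periodic function, $v \in \mathcal{N}(k\bm{b})$ with $\mathcal{A}_{0,kN}(v) = k\rho(\bm{b}) = \rho(k\bm{b})$.

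Next, I define the competitor $\tilde v : [0, kN] \to \mathbb{R}$ by
\[
\tilde v(t) = \begin{cases} w(t), & t \in [a,b], \\ v(t), & t \in [0,kN]\setminus [a,b]. \end{cases}
\]
Since $w(a)=v(a)$ and $w(b)=v(b)$, this is continuous at the junction points; combined with $v|_{[0,kN]} \in H^1$ and $w \in H^1([a,b])$, I get $\tilde v \in H^1([0,kN])$. Moreover $\tilde v(0)=v(0)=v(kN)=\tilde v(kN)$, so $\tilde v$ extends to an element of $H^1(\mathbb{R}/kN\mathbb{Z})$. The sign conditions $b_i \tilde v(i) > 0$ for $i \in [0,kN] \cap \mathbb{Z}$ hold at integers in $[a,b]$ by $w \in W$, and at integers outside $[a,b]$ by $v \in \Omega^{\mathrm{per}}_N(\bm{b}) \subset \Omega^{\mathrm{per}}_{kN}(k\bm{b})$. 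Hence $\tilde v \in \Omega^{\mathrm{per}}_{kN}(k\bm{b})$.

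Finally, I compute
\[
\mathcal{A}_{0,kN}(\tilde v) = \mathcal{A}_{0,kN}(v) - \mathcal{A}_{a,b}(v) + \mathcal{A}_{a,b}(w) < \mathcal{A}_{0,kN}(v) = \rho(k\bm{b}),
\]
contradicting the definition of $\rho(k\bm{b})$ as an infimum over $\Omega^{\mathrm{per}}_{kN}(k\bm{b})$. The only point that needs care—and the main (mild) obstacle—is checking that the glued function actually lands in the admissible class $\Omega^{\mathrm{per}}_{kN}(k\bm{b})$: continuity at the endpoints of $[a,b]$ follows from the boundary matching in the definition of $W$, the sign conditions at integers split cleanly between the two regions, and the $N$-periodicity of $\bm{b}$ guarantees that the shift used at the beginning does not alter the problem.
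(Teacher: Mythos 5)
Your proposal is correct and follows essentially the same route as the paper: a cut-and-paste argument that splices the hypothetical better competitor $w$ into $v$ over a window of length $kN$ containing $[a,b]$, then invokes Proposition \ref{prop:kper} (via $\rho(k\bm{b})=k\rho(\bm{b})$) to reach a contradiction with the minimality of $v$. The only cosmetic difference is that the paper places the window as $[n^-, n^-+kN]$ with $n^-\le a$ rather than translating so that $0<a$, and you spell out the admissibility check that the paper leaves implicit.
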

\begin{proof}
Assume that there is a curve $u \in W$ such that 
\[ \mathcal{A}_{a,b}(u) < \mathcal{A}_{a,b}(v).\]
Take integers $n^+$, $n^-$ and $k$ such that $n^- \le a< b \le n^+$
and $k N \ge n^+-n^-$.
Define $\phi$ by
\[ \phi(t) =\begin{cases} 
u(t) & ( t \in [a, b]) \\
v(t) & (t \in [n^-, a) \cup (b, n^- + kN] ).
\end{cases}
\]
The modified curve $\phi$ belongs to $\mathcal{N}(k\bm{b})$ and 
has a lower value of the action functional than $v$, which contradicts Proposition \ref{prop:kper}.
\end{proof}
The above properties have been established under the assumption that  $\bm{b} \in \mathcal{P}_N$.
Restricting this to $\bm{b} \in \mathcal{S}_N$, $\mathcal{N}(\bm{b})$ is a set of symmetric trajectories in the following sense.
\begin{proposition}
For $\bm{b} \in \mathcal{S}_N$ and $v \in \mathcal{N}(\bm{b})$, $v(t)=v(-t)$ for all $t \in \rr$.
\end{proposition}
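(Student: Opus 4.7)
The plan is to exploit the time-reversal symmetry of the Lagrangian, combined with the symmetry of the sequence $\bm b \in \mathcal{S}_N$, to construct from $v$ a reflected competitor $\tilde v(t) := v(-t)$ that also lies in $\mathcal{N}(\bm b)$, and then use the total ordering from Proposition~\ref{prop:ordered} to force $\tilde v \equiv v$.

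First, I would verify that $\tilde v \in \Omega^{\mathrm{per}}_N(\bm b)$. Since $v$ is $N$-periodic, so is $\tilde v$. For the sign conditions, $a_n \tilde v(n) = b_n v(-n)$; using that $\bm b \in \mathcal{S}_N$ gives $b_n = b_{-n}$, and this equals $b_{-n} v(-n) > 0$ by the membership of $v$ in $\Omega^{\mathrm{per}}_N(\bm b)$. Next I would check that the action is invariant under $t \mapsto -t$. The key observation is that the periodic Kepler solution $x(t)$ on the $x$-axis is even: it satisfies the autonomous reversible ODE \eqref{eqn:x} with $x(0)=0$, and reflecting around a collision time preserves the equation, so $x(-t)=x(t)$. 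A change of variables $s=-t$ then gives
\[
\mathcal{A}_{0,N}(\tilde v)=\int_0^N \tfrac{1}{2}\dot v(-t)^2+\tfrac{1}{\sqrt{x(t)^2+v(-t)^2}}\,dt = \int_{-N}^{0}\tfrac{1}{2}\dot v(s)^2+\tfrac{1}{\sqrt{x(s)^2+v(s)^2}}\,ds=\mathcal{A}_{0,N}(v),
\]
where the last equality uses the $N$-periodicity of both $v$ and $x$. Hence $\tilde v$ achieves the infimum $\rho(\bm b)$ and so $\tilde v \in \mathcal{N}(\bm b)$.

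Finally I would invoke Proposition~\ref{prop:ordered}: since $\mathcal{N}(\bm b)$ is totally ordered, one of the three alternatives holds for $v$ and $\tilde v$. Evaluating at $t=0$ gives $\tilde v(0)=v(0)$, which rules out the two strict inequality alternatives. Therefore $\tilde v(t)=v(t)$ for all $t \in \mathbb R$, i.e.\ $v(-t)=v(t)$, as claimed.

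There is no real obstacle here; the only subtle point worth stating cleanly is the evenness of $x(t)$, which is an immediate consequence of the time-reversibility of \eqref{eqn:x} together with the normalization $x(n)=0$ for $n\in\mathbb Z$. Everything else is a direct application of the ordering proposition.
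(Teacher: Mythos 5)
Your proof is correct and follows essentially the same route as the paper: define the reflected curve $w(t)=v(-t)$, observe it lies in $\mathcal{N}(\bm{b})$ and agrees with $v$ at $t=0$, and conclude via the total ordering of Proposition~\ref{prop:ordered}. You simply spell out the verification that $w\in\mathcal{N}(\bm{b})$ (symmetry of $\bm{b}$ and evenness of $x(t)$), which the paper states as ``clearly.''
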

\begin{proof}
Set a function $w$ by $w(t)=v(-t)$.
Clearly $w(0)=v(0)$ and $w \in \mathcal{N}(\bm{b})$.
By Proposition \ref{prop:ordered}, $v(t)=v(-t)$ for all $t$.
\end{proof}
Symmetries imply that global minimizers are periodic, as follows.
\begin{proposition}\label{prop:ygerho}
Let $\bm{b} \in \mathcal{S}_N$.
For any $y \in \Omega_{0, N}(\bm{b})$, 
$\mathcal{A}_{0, N}(y) \ge \rho(\bm{b})$. 

\end{proposition}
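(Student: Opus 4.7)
The plan is to exploit the symmetry $\bm{b} \in \mathcal{S}_N$ by reflecting any $y \in \Omega_{0,N}(\bm{b})$ about the point $t = N$ to build a $2N$-periodic competitor, and then to invoke Proposition \ref{prop:kper}, which asserts $\rho(2\bm{b}) = 2\rho(\bm{b})$. The non-trivial point is that an element of $\Omega_{0,N}(\bm{b})$ need not satisfy $y(0) = y(N)$, so one cannot directly $N$-periodically extend it; reflection manufactures a legitimate $2N$-periodic function whose action is exactly $2\mathcal{A}_{0,N}(y)$.

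Concretely, for $y \in \Omega_{0,N}(\bm{b})$ I would set
\[
\tilde{y}(t) = \begin{cases} y(t), & t \in [0,N], \\ y(2N - t), & t \in [N, 2N], \end{cases}
\]
and extend $\tilde{y}$ to $\rr$ with period $2N$. Continuity at $t = N$ is immediate, and $\tilde{y}(0) = \tilde{y}(2N) = y(0)$ gives $H^1$-regularity across $0$. Moreover, for each $0 \le j \le N$ the sign requirement $(2\bm{b})_{N+j}\,\tilde{y}(N+j) > 0$ reduces to $b_j\, y(N-j) > 0$, which follows from $y \in \Omega_{0,N}(\bm{b})$ together with the defining symmetry $b_j = b_{N-j}$ of $\mathcal{S}_N$. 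Hence $\tilde{y} \in \Omega^{\mathrm{per}}_{2N}(2\bm{b})$.

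To compare actions, a change of variables $s = 2N - t$ in $\mathcal{A}_{N,2N}(\tilde{y})$ leaves the kinetic term invariant (since $\dot{\tilde{y}}(t)^2 = \dot{y}(2N-t)^2$) and turns the potential integrand into $1/\sqrt{x(2N-s)^2 + y(s)^2}$. Because $x$ is $1$-periodic and even (both following from the time-reversal symmetry of $\ddot x = -1/(8x^2)$ through the regularized collision at the origin), one has $x(2N - s) = x(s)$, so this integral equals $\mathcal{A}_{0,N}(y)$. Therefore $\mathcal{A}_{0,2N}(\tilde{y}) = 2\mathcal{A}_{0,N}(y)$, and Proposition \ref{prop:kper} yields
\[
2\mathcal{A}_{0,N}(y) = \mathcal{A}_{0,2N}(\tilde{y}) \ge \rho(2\bm{b}) = 2\rho(\bm{b}),
\]
as required.

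The only delicate ingredient is the evenness of $x$, which I expect should be recorded as a short preliminary lemma on the fixed one-dimensional orbit; once that symmetry is in hand, both the sign bookkeeping for $\tilde{y}$ and the action identity become formal, and the inequality follows immediately.
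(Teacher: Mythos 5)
Your argument is correct, but it takes a genuinely different route from the paper's. The paper also exploits the reflection $t \mapsto N-t$, but instead of doubling the period it forms the two functions $\phi(t)=\max\{y(t),y(N-t)\}$ and $\psi(t)=\min\{y(t),y(N-t)\}$, checks that both lie in $\Omega^{\mathrm{per}}_N(\bm{b})$ (periodicity at the endpoints is automatic since $\phi(0)=\max\{y(0),y(N)\}=\phi(N)$, and the sign conditions follow from $b_n=b_{N-n}$), and then uses the pointwise identity $L(\phi)+L(\psi)=L(y(\cdot))+L(y(N-\cdot))$ to get $2\rho(\bm{b})\le \mathcal{A}_{0,N}(\phi)+\mathcal{A}_{0,N}(\psi)=2\mathcal{A}_{0,N}(y)$. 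So the paper needs only the definition of $\rho(\bm{b})$ applied twice, staying at period $N$, whereas you manufacture a single $2N$-periodic competitor and must additionally invoke Proposition~\ref{prop:kper} to identify $\rho(2\bm{b})=2\rho(\bm{b})$; that extra dependency is harmless here since Proposition~\ref{prop:kper} precedes this statement and does not rely on it, but it makes your proof slightly less self-contained. In exchange, your route avoids the (mild) technicalities of max/min surgery on $H^1$ functions. Note that both proofs rest on exactly the same hidden fact, namely $x(N-t)=x(t)$, i.e.\ the $1$-periodicity and evenness of the regularized collinear orbit $x$: the paper uses it silently in asserting $\mathcal{A}_{0,N}(\phi)+\mathcal{A}_{0,N}(\psi)=2\mathcal{A}_{0,N}(y)$, while you correctly isolate it as a preliminary lemma --- that is a point in your write-up's favor.
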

\begin{proof}
Take any $y \in \Omega_{0,N}(\bm{b})$.
Let
\[
\phi(t)=\max\{y(t), y(N-t)\}, \ \text{and} \ \psi(t)=\min\{y(t), y(N-t)\}.
\]
Then, $\phi, \psi \in \Omega_N^{\mathrm{per}} (\bm{b})$. 
Therefore,
\[2 \rho(\bm{b}) \le  \mathcal{A}_{0, N}(\phi)+\mathcal{A}_{0, N}(\psi) 
= 2\mathcal{A}_{0, N}(y),\]
which is the desired result.
\end{proof}
\subsection{Normalization}
Next, we define a normalized functional on $H^1_{\mathrm{loc}}(\mathbb{R})$.
For $u \in H^1_{\mathrm{loc}}(\mathbb{R})$,  $\bm{b^+} \in \mathcal{S}_{N^+}$,
and $\bm{b^-} \in \mathcal{S}_{N^-}$,
set 
\[ J(u;\bm{b}^-,\bm{b}^+)= \sum_{p \in \mathbb{Z}} a_p(u)\]
where
\begin{align*}
    a_p(u) =
    \begin{cases}
        \mathcal{A}_{p,p+1}- \frac{\rho(\bm{b^-})}{N^-} & (p < 0)  \\
        \mathcal{A}_{p,p+1}- \frac{\rho(\bm{b^+})}{N^+} & (p \ge 0) .
    \end{cases}
\end{align*}
We abbreviate $J(u;\bm{b}^-,\bm{b}^+)$ as $J(u)$.
In the following, we always assume the condition of Theorem \ref{th:main}, that is, 
there are $K^+$ and $K^-$ with $ K^- <K^+$ such that $a_{n}=b_n^+$ for $n \ge K^{+} +1$ and $a_{n}=b_n^-$ for $n \le K^{-}-1$.
We now discuss the basic properties of $J$.
\begin{proposition}
The infimum of $J$ is bounded below, i.e.,
\[\inf_{u \in \Omega(\bm{a})} J(u) > -\infty.\]
\end{proposition}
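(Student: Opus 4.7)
The plan is to decompose $J(u) = \sum_{p \in \zz} a_p(u)$ into three pieces—a far-positive tail $\sum_{p \ge P^+}$, a far-negative tail $\sum_{p < P^-}$, and a bounded middle $\sum_{P^- \le p < P^+}$—and bound each below separately with constants independent of $u$.

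The decisive choice is to take $P^+ := N^+ \lceil (K^++1)/N^+ \rceil$ and $P^- := N^- \lfloor (K^--1)/N^- \rfloor$, so that $P^\pm$ are integer multiples of $N^\pm$ with $P^+ \ge K^++1$ and $P^- \le K^--1$. Insisting on multiples of $N^\pm$ (rather than starting the tails at $K^\pm$) is essential: Proposition \ref{prop:ygerho} requires the relevant symbolic sequence to lie in $\mathcal{S}_{N^+}$, and while an arbitrary cyclic shift of $\bm{b}^+$ still lies in $\mathcal{P}_{N^+}$, it need not be symmetric, whereas shifts by multiples of $N^+$ reproduce $\bm{b}^+$ itself.

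For the positive tail, group the terms into blocks of length $N^+$ starting at $P^+$:
\[
\sum_{p \ge P^+} a_p(u) = \sum_{j=0}^\infty \left[ \mathcal{A}_{P^+ + jN^+,\, P^+ + (j+1)N^+}(u) - \rho(\bm{b}^+) \right].
\]
Set $\tilde u_j(t) := u(t + P^+ + jN^+)$. The sign condition $b^+_n \tilde u_j(n) > 0$ for $n \in [0,N^+]\cap\zz$ holds because $n + P^+ + jN^+ \ge K^+ + 1$ gives $a_{n + P^+ + jN^+} = b^+_{n + P^+ + jN^+}$, and because $P^+ + jN^+$ is a multiple of $N^+$ so $b^+_{n + P^+ + jN^+} = b^+_n$. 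Thus $\tilde u_j \in \Omega_{0,N^+}(\bm{b}^+)$. Since $x$ has period $1$ and the shift is an integer, $\mathcal{A}_{P^+ + jN^+,\, P^+ + (j+1)N^+}(u) = \mathcal{A}_{0,N^+}(\tilde u_j)$, and Proposition \ref{prop:ygerho} gives $\mathcal{A}_{0,N^+}(\tilde u_j) \ge \rho(\bm{b}^+)$. Each block contributes a nonnegative amount, so $\sum_{p \ge P^+} a_p(u) \ge 0$. The negative tail is handled identically with $\bm{b}^-$, yielding $\sum_{p < P^-} a_p(u) \ge 0$.

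Finally, the middle sum has exactly $P^+ - P^-$ terms, and the nonnegativity of the Lagrangian $L(y) = \frac{1}{2}\dot y^2 + 1/\sqrt{x(t)^2 + y^2}$ gives $\mathcal{A}_{p,p+1}(u) \ge 0$, so each term satisfies $a_p(u) \ge -\max\{\rho(\bm{b}^+)/N^+,\,\rho(\bm{b}^-)/N^-\}$. Combining the three bounds yields $J(u) \ge -C$ for a constant $C$ depending only on $\bm{a}, \bm{b}^\pm, K^\pm$, and independent of $u$. The only nontrivial point is the alignment step choosing $P^\pm$ as multiples of $N^\pm$ to preserve the symmetry hypothesis of Proposition \ref{prop:ygerho}; the rest is routine.
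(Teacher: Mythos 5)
Your proof is correct and follows essentially the same route as the paper: split $J$ into two far tails handled blockwise via Proposition \ref{prop:ygerho} plus a finite middle sum bounded below termwise by the nonnegativity of the Lagrangian. The one point where you are more careful than the paper is the block alignment you highlight: the paper sums over $p$ from $lN^++1$ to $(l+1)N^+$, which amounts to shifting $\bm{b}^+$ by $1$ and thus, strictly speaking, loses the symmetry $b^+_n=b^+_{N^+-n}$ that Proposition \ref{prop:ygerho} requires, whereas your choice of $P^\pm$ as integer multiples of $N^\pm$ keeps the shifted sequence equal to $\bm{b}^\pm$ itself.
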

\begin{proof}
Let $K^+$ and $K^-$ be 
\begin{align}
\label{eq;kpm}
\begin{split}
K^+ &= \max\{n \in \mathbb{Z} \mid a_n \neq b_n^+ \},  \ \text{and} \\
K^- &= \min\{n \in \mathbb{Z} \mid a_n \neq b_n^- \}. 
\end{split}
\end{align} 
Assume $l$ satisfies $lN^{+}+1 \ge K^+$. 
Proposition \ref{prop:ygerho} implies
\[ \sum_{p=lN+1}^{(l+1)N} a_p (u) \ge 0. \]
The case of $(l+1)N^- \le K^-$ is similar.
Since $\sum_{p=K^-}^{K^+} a_p(u)$ is bounded from below, so is $J$. 
\end{proof}

\begin{proposition}
There exists a curve $v \in \Omega(\bm{a})$ 
such that 
\[ J(v) < \infty.\]
\end{proposition}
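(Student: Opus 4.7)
The plan is to produce $v \in \Omega(\bm{a})$ explicitly by gluing together the periodic minimizers provided by Theorem \ref{theorem:Shibayama} on the tails where $\bm{a}$ already agrees with $\bm{b}^\pm$, and bridging them with a simple piecewise-linear curve in between; then $J(v)$ splits into two tails (that vanish by periodicity) and a finite middle sum.

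First I would fix $\gm^+ \in \mathcal{N}(\bm{b}^+)$ and $\gm^- \in \mathcal{N}(\bm{b}^-)$, and choose integers $M^+ > 0$ and $M^- < 0$ with $M^+ \ge K^+ + 1$ divisible by $N^+$ and $M^- \le K^- - 1$ divisible by $N^-$ (always possible by pushing $M^\pm$ sufficiently far from $0$). Then define
\[
v(t) = \begin{cases} \gm^+(t), & t \ge M^+, \\ \gm^-(t), & t \le M^-, \\ \ell(t), & M^- \le t \le M^+, \end{cases}
\]
where $\ell$ is the continuous piecewise-linear function with breakpoints at the integers, $\ell(M^\pm) = \gm^\pm(M^\pm)$, and $\ell(n) = a_n$ at every interior integer $n \in (M^-, M^+)$. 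The hypothesis $a_n = b_n^\pm$ on the respective tails, together with the explicit choice of $\ell(n) = a_n$ in the middle, give $a_n v(n) > 0$ for every $n \in \mathbb{Z}$, so $v \in \Omega(\bm{a})$.

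Next I would write
\[
J(v) = \sum_{p \le M^- - 1} a_p(v) \, + \, \sum_{p = M^-}^{M^+ - 1} a_p(v) \, + \, \sum_{p \ge M^+} a_p(v)
\]
and handle the three pieces separately. On the right tail, $v = \gm^+$ and, since $M^+$ is a multiple of $N^+$, the $N^+$-periodicity of $\gm^+$ together with $\mathcal{A}_{0,N^+}(\gm^+) = \rho(\bm{b}^+)$ yields
\[
\sum_{p = M^+ + k N^+}^{M^+ + (k+1) N^+ - 1} a_p(v) = \mathcal{A}_{M^+ + k N^+,\, M^+ + (k+1) N^+}(\gm^+) - \rho(\bm{b}^+) = 0
\]
for every $k \ge 0$, so the tail telescopes to $0$ when summed along these block boundaries. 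The left tail vanishes by the symmetric computation. The middle sum has only finitely many terms, each of which is finite: the kinetic part is bounded because $\ell$ is Lipschitz, and the potential $(x(t)^2 + \ell(t)^2)^{-1/2}$ stays bounded on $[M^-, M^+]$ because $x(t) = 0$ only at integer $t = n$, where $\ell(n) = \pm 1 \ne 0$.

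The only subtle point is what ``$J(v) < \infty$'' means in the absence of absolute summability of $\sum_p a_p(\gm^\pm)$: the argument relies on the period-by-period grouping of the tails, which is legitimate precisely because of the divisibility choices for $M^\pm$. Once this is granted, the two tails contribute zero and the finiteness of $J(v)$ reduces to the finiteness of the middle sum, which is immediate from the construction of $\ell$.
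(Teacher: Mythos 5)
Your proposal is correct and follows essentially the same route as the paper: glue the periodic minimizers of Theorem \ref{theorem:Shibayama} onto the two tails, bridge them with an explicit finite-action curve in the middle, and observe that each period-length block of the tails contributes exactly $0$ to $J$ while the middle contributes a finite amount. The paper uses an arbitrary smooth bridge on $[K^-,K^+]$ where you use a piecewise-linear one glued at multiples of $N^\pm$ (and you are somewhat more careful in checking that the potential term stays bounded on the bridge and in flagging the block-grouping issue), but these are presentational differences, not a different argument.
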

\begin{proof}
    Take $\xi^\pm$ as a minimizer of the action functoinal $\mathcal{A}_{0, N^{\pm}}$ in $\Omega_{N^\pm}^{\mathrm{per}}$ guaranteed by Theorem \ref{theorem:Shibayama}. 
Let $\xi^0$ be an arbitrary smooth path in $\Omega_{K^+ K^-}(\bm{a})$
such that $\xi^{0
}(K^\pm)=\xi^{\pm}(K^\pm)$. 
Define 
\[ \xi(t) = \begin{cases} 
\xi^-(t) & (t <K^-) \\
\xi^0(t) & (K^- \le t \le K^+ )\\
\xi^+(t) & (t > K^+). 
\end{cases} 
\]
For $l N^++1 \ge K^+$, 
\[ \sum_{p=lN^++1}^{(l+1)N^+} a_p (\xi)=0.  \]
For $l N^- \le K^-$, 
\[ \sum_{p=(l-1)N^-+1}^{l N^-} a_p (\xi)=0.  \]
Since \[ \sum_{p =lN^- +1}^{l N^+} a_p (\xi) \]
is bounded, 
so is $J(\xi)$. 
\end{proof}

\section{Proof of Theorem \ref{th:main}}
\label{seq:mainproof}

We will show the existence of the heteroclinic orbit
$y^\ast \in \Omega(\bm{a})$ from $\gamma^-$ to $\gamma^+$ and focus on the proof in the case of $a_{K^+}=1$ and $b_{K^+}^+=-1$.
Other cases, including homoclinic orbits, can be shown in a similar way.
Notice that if 
$a_{K^+}=1$ and $b_{K^+}^+=-1$, 
we can let 
$\gamma^+$ be the maximum solutions in $\mathcal{N}(\bm{b}^+)$
in the meaning  of Proposition \ref{prop:ordered}.
If 
$a_{K^+}=-1$ and $b_{K^+}^+=1$, let 
$\gamma^+$ be the mininum solutions in $\mathcal{N}(\bm{b}^+)$.
Similarly, 
if $a_{K^-}=1$ and $b_{K^-}^-=-1$, 
let 
$\gamma^-$ be the maximum solutions in $\mathcal{N}(\bm{b}^-)$.
If $a_{K^-}=-1$ and $b_{K^-}^-=1$,  let 
$\gamma^-$ be the mininimum solutions in $\mathcal{N}(\bm{b}^-)$.

Before the proof, we set new functional spaces.
We define
\begin{align*}
\Gamma_{-\infty} &= \{ u \in H^1_{\mathrm{loc}}(\mathbb{R}, \mathbb{R}) \mid 
\| u - \gamma^- \|_{L_2([i, i+1])} \to 0 \ ( i \to - \infty) \}, \ \text{and}\\
\Gamma_\infty &= \{ u \in H^1_{\mathrm{loc}}(\mathbb{R}, \mathbb{R}) \mid 
\| u - \gamma^+ \|_{L_2([i, i+1])} \to 0 \ ( i \to \infty) \} .
\end{align*}
In addition, set
\[ \Gamma_1(\bm{a}) = \Omega(\bm{a}) \cap \Gamma_{-\infty} \cap \Gamma_{\infty}\]
and 
\[ c_1 (\bm{a})= \inf \{ J(y) \mid y \in \Gamma_1(\bm{a}) \}.\]

Now we are ready to prove our main theorem.
\begin{proof}[Proof of Theorem \ref{th:main}]
It suffices to show that
\[ \mathcal{M}(\bm{a}) = \{y \in \Gamma_1 (\bm{a}) \mid J(y)=c_1(\bm{a}) \} \neq \emptyset.\]
Let $u_k$ be a minimizing sequence of $J$ on $\mathcal{M}(\bm{a})$.
We can take a subsequence $z_k$ of $u_k$ that converges to some $y^\ast$ in $H^1([-1, 1])$. 
Since $y^\ast$ is a minimizer of $\mathcal{A}_{-1, 1}$ under 
a fixed-ends constraint,
it is also a solution satisfying 
\[ a_k y^\ast (k) >0 \qquad (k=-1, 0, 1).\]
Take an arbitrary non-negative integer $l$. 
We have a subsequence $w_k^{(l)}$ of $z_k$ that converges to some $\alpha^{(l)}$ in $H^1([-l, l])$. 
The minimizers have no total collision \cite{Shibayama}.
Hence, $\alpha^{(l)}$ is a solution satisfying 
\[ a_k \alpha^{(l)} (k) >0 \qquad (k=-l, 0, l).\]
Since $w_k^{(l)}$ is a subsequence of $z_k$, 
we see that $w_k^{(l)}$ also converges to $y^\ast(t)$ in $H^1([-1, 1])$.
Thus, $w_k^{(l)}$ is identical with $y^\ast(t)$. 
Since $l$ is an arbitrary non-negative integer,
$y^\ast(t)$ satisfies 
\[ a_k y^\ast (k) >0 \qquad (k \in \mathbb{Z}).\]

Let $y^\ast \in \mathcal{M}(\bm{a})$. 
We show $y^\ast < \gamma^+$ on $[K^+, \infty)$.
Assume that this inequality does not hold. 
Since $y^\ast(K^+)<0 < \gamma^\ast(K^+)$, 
$y^\ast$ and $\gamma^+$ intersect on $(K^+, \infty)$. 
Let $y^\ast(t_0)=\gamma^+(t_0)$ and $t_0 \in (K^+, \infty)$.
Define 
\[ z(t) =\begin{cases}
y^\ast (t) & (t \le t_0) \\
\gamma^+(t) & (t > t_0). 
\end{cases}\]
From Proposition \ref{prop:ygerho}, $J(z) \le J(y^\ast)$. 
Therefore, $z$ is also a minimizer of $J$ and ,hence,  $z$ is a solution. 
Since $z$ and $\gamma^+$ are identical for all $t \ge t_0$ and 
not identical for some $t < t_0$, this contradicts the uniqueness of 
solution.

Since $J(y^\ast) < \infty$, 
\[ \lim_{l \to \pm \infty} \sum_{p=lN^++1}^{(l+1)N^+}   a_p (y^\ast) = 0. \] Therefore, a subsequence
\[u_l(t)= y^\ast( t +(lN^++1)) \qquad (t \in [0, N^+]). \]
converges to one of $\mathcal{N}(\bm{b}^+)$. 
Finally, we show that $u_l$ tends to $\gamma^+$.
Assume that $u_l$ does not converge to $\gamma^+$.
Then, there is a subsequence $v_k$ of $u_l$ that is uniformly $\delta>0$ away from $\gamma^+$.
Since $v_k$ is a minimizing sequence, there is a subsequence of $v_k$ that converges
to a minimizer, which should be $\gamma^+$.
This is a contradiction. 
Hence, $u_l$ converges to $\gamma^+$ as $l \to \infty$.
Similarly $u_l$ converges to $\gamma^-$ as $l \to -\infty$.
Consequently $u_l$ is a heteroclinic solution connecting $\gamma_-$ to $\gamma^+$.
\end{proof}

To conclude the paper, we pose an open problem.
\begin{open}
    Are there similar heteroclinic and homoclinic orbits of our theorems in the case of the spatial Sitnikov problem as seen in Figure \ref{fig:Sitnikov}?
\end{open}
At first glance, this problem seems simple as we do not need to show collisionlessness.  
However, it turns out to be difficult to prove because, in our current setting, binary collisions play an essential role in determining symbolic sequences and gap pairs.  
In the elliptic case, it may not be possible to determine whether all three bodies lie on a straight line at the moment they are on the \( y = 0 \) plane.  
Therefore, it is difficult to apply the same approach as in the present study.

\end{document}